\numberwithin{equation}{section}
\DeclareFontFamily{OT1}{rsfs}{}
\DeclareFontShape{OT1}{rsfs}{n}{it}{<-> rsfs10}{}
\DeclareMathAlphabet{\mathscr}{OT1}{rsfs}{n}{it}
\theoremstyle{plain}
\newtheorem{theorem}{Theorem}[section]
\newtheorem{proposition}[theorem]{Proposition}
\newtheorem{lemma}[theorem]{Lemma}
\newtheorem{corollary}[theorem]{Corollary}
\newtheorem{conjecture}[theorem]{Conjecture}
\theoremstyle{definition}
\newtheorem{definition}[theorem]{Definition}
\newtheorem{remark}[theorem]{Remark}
\newcommand\R{\mathbb{R}}
\newcommand\Z{\mathbb{Z}}
\newcommand\N{\mathbb{N}}
\begin{document}

\title{Infinite partial sumsets in the primes}

\author{Terence Tao}
\address{UCLA Department of Mathematics, Los Angeles, CA 90095-1555.}
\email{tao@math.ucla.edu}

\author{Tamar Ziegler}
\address{Einstein Institute of Mathematics, The Hebrew University of Jerusalem,
Jerusalem, 91904, Israel}
\email{tamarz@math.huji.ac.il}


\subjclass[2020]{11P32}

\begin{abstract}  We show that there exist infinite sets $A = \{a_1,a_2,\dots\}$ and $B = \{b_1,b_2,\dots\}$ of natural numbers such that $a_i+b_j$ is prime whenever $1 \leq i < j$.
\end{abstract}

\maketitle


\section{Introduction}

In \cite{erdos}, Erd\H{o}s asked the question of whether, given a subset $A$ of the natural numbers $\N = \{1,2,3,\dots\}$ of positive upper density, there existed an infinite subset $B$ of $A$ and a natural number $t$ such that $b+b'+t \in A$ for all distinct $b,b'$ in $B$.  This conjecture was recently proven in \cite{kmrr}, using techniques from ergodic theory and topological dynamics.  It is natural to ask whether the same result holds if the set $A$ is replaced by the primes ${\mathcal P} = \{2,3,5,\dots\}$.  Our first result is the observation that one can answer this question affirmatively assuming the Dickson--Hardy--Littlewood conjecture, which we now pause to recall.

\begin{definition}[Admissible and prime-producing tuples]  A tuple $(h_1,\dots,h_k)$ of natural numbers is said to be an \emph{admissible tuple} if it avoids at least one residue class mod $p$ for each prime $p$.  A tuple $(h_1,\dots,h_k)$ is said to be \emph{prime-producing} if there are infinitely many $n$ such that $n+h_1,\dots,n+h_k$ are simultaneously prime.
\end{definition}

It is easy to see that every prime-producing tuple must be admissible.  In the converse direction, we have

\begin{conjecture}[Dickson--Hardy--Littlewood conjecture]\label{qhl}  Every admissible tuple \\$(h_1,\dots,h_k)$ is prime-producing.
\end{conjecture}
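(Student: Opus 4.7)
The plan is to apply the Hardy--Littlewood circle method in order to establish the quantitative asymptotic
\[
\sum_{n \leq X} \Lambda(n+h_1) \cdots \Lambda(n+h_k) \;=\; (\mathfrak{S}(\mathbf{h}) + o(1))\, X,
\]
where $\mathfrak{S}(\mathbf{h}) = \prod_p \bigl(1-\nu_p(\mathbf{h})/p\bigr)\bigl(1-1/p\bigr)^{-k}$ is the singular series and $\nu_p(\mathbf{h}) = \#\{h_i \bmod p : 1 \leq i \leq k\}$. A standard local computation shows that admissibility of $(h_1,\dots,h_k)$ is equivalent to $\mathfrak{S}(\mathbf{h}) > 0$, so such an asymptotic immediately produces infinitely many $n$ for which $n+h_1, \dots, n+h_k$ are simultaneously prime.

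To set things up I would open each factor as a Fourier integral, reducing the problem to controlling integrals over the torus of products of the von Mangoldt exponential sum $S(\alpha) := \sum_{n \leq X} \Lambda(n) e(n\alpha)$. I would then partition the torus into major arcs $\mathfrak{M}$, consisting of tight neighbourhoods of rationals $a/q$ with $q \leq (\log X)^A$, and the complementary minor arcs $\mathfrak{m}$. On $\mathfrak{M}$, Siegel--Walfisz yields a precise approximation to $S(\alpha)$ by a Ramanujan--sum main term together with the Fourier transform of $1_{[1,X]}$, and a local-to-global computation then reassembles $\mathfrak{S}(\mathbf{h})\, X$ as the predicted main contribution.

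The main obstacle---and the reason Conjecture~\ref{qhl} has remained open for every $k \geq 2$ for over a century---is the minor-arc contribution. One needs the integral of $\prod_i S(\alpha+\beta_i)$ over $\mathfrak{m}$ to be of smaller order than $X$, but Vinogradov's bound $\sup_{\alpha \in \mathfrak{m}} |S(\alpha)| \ll X(\log X)^{-A}$ saves only logarithmically and is insufficient to control a product of two or more independently oscillating factors after integration. In sieve language this is Selberg's parity obstruction, which blocks any purely sieve-theoretic derivation of an asymptotic for prime $k$-tuples with $k \geq 2$; the Goldston--Pintz--Y{\i}ld{\i}r{\i}m and Maynard--Tao machinery extracts only existence of small gaps, not the full Dickson--Hardy--Littlewood asymptotic. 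Overcoming this barrier seems to require a fundamentally new input---either a non-sieve estimate that captures parity, or unconditional access to the distribution of primes in arithmetic progressions well beyond Bombieri--Vinogradov---and for this reason I would, as the authors do, adopt Conjecture~\ref{qhl} as a working hypothesis rather than attempt an unconditional proof.
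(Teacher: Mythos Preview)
The statement you were asked to ``prove'' is a \emph{conjecture}, not a theorem: the paper does not prove Conjecture~\ref{qhl} and makes no attempt to do so. It is stated as an open hypothesis, with the surrounding text explicitly noting that even the $k=2$ case remains unresolved (the twin prime conjecture being the paradigmatic instance), and that only partial results such as Zhang's and Maynard's are known unconditionally. There is therefore no ``paper's own proof'' against which to compare your attempt.

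Your write-up correctly identifies this. You sketch the Hardy--Littlewood asymptotic and the circle-method framework that would yield it, and you accurately locate the obstruction on the minor arcs (and, in sieve terms, at the parity barrier). Your conclusion---that one should treat Conjecture~\ref{qhl} as a working hypothesis rather than attempt an unconditional proof---is exactly what the paper does: Conjecture~\ref{qhl} is assumed in Theorem~\ref{main1} and its proof in Section~\ref{easy}, while the unconditional Theorem~\ref{main2} deliberately aims for a weaker conclusion. So your proposal is not a proof, but it is the correct assessment of the situation, and it aligns with the paper's treatment of the statement.
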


This conjecture is a special case of a conjecture of Dickson \cite{dickson}, with the case $k=2$ being an older conjecture of de Polignac \cite{dep}; the specific case of the tuple $(0,2)$ is of course the infamous twin prime conjecture.  In \cite{hl}, Hardy and Littlewood proposed a more quantitative asymptotic for $\sum_{n \leq x} \Lambda(n+h_1) \dots \Lambda(n+h_k)$ which implies Conjecture \ref{qhl}, but we will not need this stronger conjecture here.  Conjecture \ref{qhl} is immediate from the infinitude of primes when $k=1$, but the existence of even a single prime-producing pair $(h_1,h_2)$ was only established relatively recently by Zhang \cite{zhang}.  Shortly afterwards, Maynard \cite{maynard} showed the existence of prime-producing $k$-tuples for any $k$; indeed, every admissible $k$-tuple contains a prime-producing $l$-tuple for some $l \gg \log k$. In \cite{polymath} it was shown that for $k=50$ one can take $l=2$.

Assuming Conjecture \ref{qhl} one can show an analogue of the Erd\H{o}s question for the primes:

\begin{theorem}[Infinite restricted sumsets in the shifted primes]\label{main1}  Assume Conjecture \ref{qhl}.  Then there exists an infinite set $B$ of primes such that $b+b'+1$ is prime for every distinct $b,b' \in B$.
\end{theorem}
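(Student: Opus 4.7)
My plan is to construct $B = \{b_1, b_2, \dots\}$ inductively as a sequence of primes with $b_i + b_j + 1$ prime for all distinct $i, j$, applying Conjecture~\ref{qhl} at each step.

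Suppose primes $b_1 < \dots < b_n$ have been chosen with $b_i + b_j + 1$ prime for all $i \neq j \leq n$. Finding the next element $b_{n+1} = m$ with $m$ and $m + (b_i+1)$ for $i \leq n$ all prime is precisely the assertion that the tuple $H_n := (0, b_1+1, \dots, b_n+1)$ is prime-producing, so by Conjecture~\ref{qhl} it suffices to ensure admissibility of $H_n$. I would maintain admissibility throughout the induction by the auxiliary invariant that $b_i \equiv -1 \pmod{W_i}$ for an increasing sequence of squarefree moduli $W_i$, essentially primorials of the first several primes. This forces $b_i + 1 \equiv 0 \pmod p$ for every prime $p \mid W_i$, so that modulo any prime $p$, only the entries $b_j + 1$ with $p \nmid W_j$ contribute residues other than $0$---a bounded number of them; hence $H_n \pmod p$ occupies only a few residue classes and admissibility follows.

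To ensure the $m$ produced at the inductive step satisfies the next instance of the invariant $m \equiv -1 \pmod{W_{n+1}}$, I would apply Conjecture~\ref{qhl} not to $H_n$ directly but to an enlarged admissible tuple $H_n' \supset H_n$. The additional entries $h$ are selected via the Chinese Remainder Theorem so that, for each prime $p \mid W_{n+1}$, the requirement that $m + h$ be prime forbids every residue $r \pmod p$ for $m$ except $r \equiv -1 \pmod p$. Any $m$ produced by Conjecture~\ref{qhl} applied to $H_n'$ is then forced to satisfy $m \equiv -1 \pmod p$ for every $p \mid W_{n+1}$, and hence $m \equiv -1 \pmod{W_{n+1}}$ by CRT; taking $m > b_n$ (which is possible since there are infinitely many such $m$) we set $b_{n+1} := m$.

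The main obstacle is the careful verification that the enlarged tuple $H_n'$ remains admissible at \emph{every} prime, not only those dividing $W_{n+1}$; this requires coordinating the CRT choices of each auxiliary entry $h$ across many primes simultaneously, typically by arranging $h \equiv 0 \pmod q$ for primes $q$ outside $W_{n+1}$ in the relevant range so that no new residues are introduced modulo such $q$. A related subtlety is that some of the previously chosen primes $b_j$ may themselves lie among the natural prime divisors of $W_{n+1}$: since $m + (b_j+1)$ being prime automatically forbids $m \equiv -1 \pmod{b_j}$, such $b_j$ must be excluded from $W_{n+1}$, and admissibility of $H_{n+1}$ at $p = b_j$ must then be maintained separately---for instance by arranging $b_{n+1} \equiv b_i \pmod{b_j}$ for some earlier index $i$ so that $b_{n+1}+1 \pmod{b_j}$ coincides with the existing residue $b_i + 1 \pmod{b_j}$ in $H_n$, thereby adding no new residue.
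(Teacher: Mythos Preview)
Your high-level strategy matches the paper's exactly: build $B=\{p_1,p_2,\dots\}$ inductively, and at each step apply Conjecture~\ref{qhl} to a tuple containing $(0,p_1+1,\dots,p_k+1)$ together with auxiliary entries that force the next prime $p_{k+1}$ to satisfy an invariant guaranteeing admissibility at all later stages. The difference lies in the choice of invariant and in how it is enforced. Your invariant $b_i\equiv -1\pmod{W_i}$ (primorials with the $b_j$ excised) does give admissibility of $H_n$ at primes dividing some $W_i$, but, as you correctly flag, it says nothing at $p=b_j$; your proposed fix of also forcing $b_{n+1}$ into a prescribed class modulo each earlier $b_j$ requires adding on the order of $b_j$ further auxiliary entries per $j$ and then re-verifying admissibility of the enlarged tuple at every prime up to its new (much larger) size. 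This can presumably be pushed through, but your sketch does not close it, and the bookkeeping is substantial.

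The paper sidesteps all of this with a single extra entry. Call $(p_1,\dots,p_k)$ a \emph{good tuple} if the $p_i>3$ are increasing primes, $p_i+p_j+1$ is prime for $i<j$, and $p_i\nmid p_j+2$ for $i<j$. To extend a good $k$-tuple, apply Conjecture~\ref{qhl} to the $(k+2)$-tuple $(0,\,2,\,p_1+1,\dots,p_k+1)$. Admissibility is a two-line check: at any prime $p\notin\{p_1,\dots,p_k\}$ the tuple avoids $1\pmod p$ (since $p\nmid p_i$ for each $i$), and at $p=p_i$ it avoids $-1\pmod{p_i}$ (using $p_i>3$ and the divisibility clause of the good-tuple condition). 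Taking $p_{k+1}:=n$ from Conjecture~\ref{qhl}, the entry ``$2$'' makes $n+2=p_{k+1}+2$ prime, hence automatically $p_i\nmid p_{k+1}+2$ for every $i\le k$, and the good-tuple invariant persists. One auxiliary entry thus replaces your entire CRT apparatus.
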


Theorem \ref{main1}  turns out to be easily established by iteratively applying Conjecture \ref{qhl} to increasingly large admissible tuples; we give the short proof in Section \ref{easy}.  We remark that it was previously shown (among other\footnote{For instance, Granville also shows assuming Conjecture \ref{qhl} that the primes contain $k$-fold sumsets $A_1+\dots+A_k$ of infinite sets $A_1,\dots,A_k$ for any $k$.} things) in \cite{Granville} that Conjecture \ref{qhl} implied that the primes contained the sumset $A+B$ of two infinite sumsets $A, B$; Theorem \ref{main1} provides a new proof of this claim.  Also, it follows from the results in \cite{balog} (see also \cite[Examples 4, 9]{green-tao}; the companion results in \cite{gt-mobius}, \cite{gtz} are not needed here in this ``complexity one'' situation) that the above theorem holds unconditionally if ``infinite set'' is replaced by ``arbitrarily large finite sets''.

\begin{remark} In view of the results in \cite{kmrr}, one might also ask if Theorem \ref{main1} continued to hold if one replaced the set of primes ${\mathcal P}$ by some positive (relative) density subset.  However, this is not the case.  Indeed, if $A$ was any subset of the primes for which there existed an infinite set $B$ for which $b+b'+1 \in A$ for all distinct $b,b' \in B$, then the set $A$ would necessarily contain bounded gaps infinitely often (since if $b_1,b_2$ are two elements of $B$ then $A$ must contain infinitely many pairs of the form $n+b_1, n+b_2$).  However, it is easy to construct a subset $A$ of the primes of relative density $1$ whose gaps between consecutive entries goes to infinity; for instance, if $h \colon \R^+ \to \R^+$ is any function with $\frac{h(x)}{\log x} \to 0$ and $h(x) \to \infty$ as $x \to \infty$, one can take $A$ to consist of all primes $p \geq 100$ such that there are no primes in the interval $[p+1, p + h(p)]$, since it follows from \cite[Theorem 2]{gallagher} that $A$ has relative density $1$ amongst all the primes, yet the gaps between consecutive entries of $A$ clearly go to infinity. A similar remark applies to Theorem \ref{main2} (or Corollary \ref{half}) below.
\end{remark}

Now we turn to what one can say unconditionally, i.e., without assuming Conjecture \ref{qhl}.  Our main result in this direction is

\begin{theorem}[Ascending chain of prime-producing tuples]\label{main2}  There exists an infinite sequence $h_1 < h_2 < \dots$ of natural numbers, such that the $k$-tuple $(h_1,\dots,h_k)$ is prime-producing for every $k$.
\end{theorem}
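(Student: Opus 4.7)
My plan is to construct the sequence $h_1 < h_2 < \dots$ iteratively, using Maynard's theorem as the central unconditional input. First apply the $W$-trick: fix a primorial $W = \prod_{p \leq w} p$ and require that every $h_i$ lie in $W\N$, so that any finite tuple in $W\N$ is automatically admissible (all entries vanish modulo primes dividing $W$, and admissibility modulo larger primes is free for tuples of bounded size). The theorem thus reduces to exhibiting an infinite set $H \subset W\N$ such that every finite subset of $H$ is prime-producing, since an increasing enumeration of $H$ is then a sequence as required.

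I would combine Maynard's theorem with the infinite Ramsey theorem, followed by a diagonal construction. For each $m$, applying Ramsey to the two-coloring of $m$-subsets of $W\N$ by the prime-producing property yields an infinite monochromatic subset, and Maynard's theorem (every admissible $K(m)$-tuple contains a prime-producing $m$-subtuple) excludes the possibility that the monochromatic colour is the non-prime-producing one. This produces, for each $m$, an infinite $A_m \subset W\N$ every $m$-subset of which is prime-producing. One then diagonalizes, inductively selecting $h_k$ while refining the ambient infinite set, to produce a single infinite set satisfying the desired property for all sizes simultaneously.

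The main obstacle, and the heart of the argument, is executing the diagonal step rigorously. At stage $k$, having chosen $h_1, \dots, h_{k-1}$ and an infinite $A_{k-1}$ with every subset of $\{h_1, \dots, h_{k-1}\} \cup A_{k-1}$ of size $\leq k-1$ prime-producing, one would like to refine to $A_k \subseteq A_{k-1}$ such that every $\leq k$-subset of $\{h_1, \dots, h_{k-1}\} \cup A_k$ is prime-producing. This requires, for each $S \subseteq \{h_1, \dots, h_{k-1}\}$, a Ramsey application to the $(k-|S|)$-subsets of $A_{k-1}$, colored by whether $S$ together with the subset is prime-producing, ruling out the non-prime-producing monochromatic class. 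The latter is \emph{not} directly covered by vanilla Maynard: one needs a conditional version asserting that for a given prime-producing tuple $S$ and any sufficiently large admissible extension $T$, there exists $T' \subseteq T$ with $S \cup T'$ prime-producing. I would expect this conditional Maynard to be obtained by modifying the Maynard sieve so that the weights $w_n$ incorporate sieve restrictions forcing $n+s$ to be $W'$-rough for each $s \in S$ (with $W'$ a slightly enlarged primorial), and then running the standard Maynard analysis on the remaining coordinates of $T$. Showing that this weight modification preserves the main-term lower bound while keeping the error terms under control is the technical difficulty I would anticipate; it is plausible that a variant of the Green--Tao--Ziegler machinery (or at least the ``complexity one'' case alluded to in the introduction via the \cite{balog} reference) is brought in at this point.
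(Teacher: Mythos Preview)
Your approach is genuinely different from the paper's, and it contains a real gap at exactly the point you flag.

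First, a minor point: a fixed primorial $W$ does \emph{not} make every finite tuple in $W\N$ admissible. For primes $p$ with $w < p \leq k$ a $k$-tuple in $W\N$ can cover all residues modulo $p$. This is easily repaired (use odd squares, as the paper does), but the sentence as written is false.

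The serious problem is the ``conditional Maynard'' statement. You need: given a fixed finite $S$ and an infinite admissible $B$ (with $S \cup B$ admissible), some $(k-|S|)$-subset $T \subset B$ has $S \cup T$ prime-producing. Your proposed fix --- weight $w_n$ to force $n+s$ to be $W'$-rough for $s \in S$, then run Maynard on $T$ --- does not deliver this. Roughness is not primality: the output would be infinitely many $n$ with $n+s$ merely $W'$-rough for $s\in S$ and $n+t$ prime for several $t \in T$, which says nothing about $S \cup T'$ being prime-producing. More fundamentally, the Maynard sieve only guarantees that $\gg \log K$ of $K$ shifts are prime; it gives no mechanism for forcing \emph{specified} shifts (the elements of $S$) to be prime. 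Already the case $|S|=1$, $|T'|=1$ asks: given $h_1$ and infinite admissible $B$, is there $h_2 \in B$ with $(h_1,h_2)$ prime-producing? Maynard applied to $\{h_1\}\cup B$ yields a prime-producing pair somewhere in this set, but possibly avoiding $h_1$. Neither the Balog result nor the complexity-one Green--Tao machinery helps here; those control averages of $\Lambda$ over affine-linear systems, not the event that prescribed translates are simultaneously prime infinitely often.

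The paper avoids this obstruction entirely, and the mechanism for doing so is the real content of the proof. Rather than fixing $h_1,\dots,h_{k-1}$ and trying to extend, the paper works with all scales at once. One sets up blocks $\{h_{i,1},\dots,h_{i,J_i}\}$ of shifts with $J_i \to \infty$, and uses a Maynard sieve weight (with parameters $\theta_i$ splitting the available sieve support among the blocks) to produce, for each large $N$, a probability measure $\nu$ on $[N,2N]$ with first-moment lower bounds $\nu(n+h_{i,j}\in\mathcal P)\gg \theta_i \log J_i/J_i$ and matching second-moment upper bounds within each block. Passing to a vague limit $\mu$ on the Cantor space $2^\Sigma$ (Furstenberg correspondence), the second-moment method gives $\mu(E_i)\gg 1$ uniformly in $i$, where $E_i$ is the event that some $h_{i,j}$ works. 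Bergelson's intersectivity lemma then yields $i_1<i_2<\dots$ with $\mu(E_{i_1}\cap\dots\cap E_{i_k})>0$ for every $k$; pigeonholing selects specific $j_r$'s, and unwinding the limit gives the prime-producing chain. The point is that Bergelson's lemma replaces the iterative extension step --- it finds all the $h_{i_r,j_r}$ simultaneously from a single uniform lower bound, so one never has to condition on previously chosen shifts being prime.
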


One can rephrase this theorem in an equivalent form:

\begin{corollary}[Primes contain half of an infinite sumset]\label{half}  There exist infinite sequences $a_1 < a_2 < \dots$ and $b_1 < b_2 < \dots$ of natural numbers such that $a_i + b_j$ is prime whenever $1 \leq i < j$.
\end{corollary}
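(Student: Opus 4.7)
The plan is to derive Corollary \ref{half} from Theorem \ref{main2} essentially by a straightforward reindexing, using Theorem \ref{main2} iteratively to build the sequence $(b_j)$.

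First I would set $a_i := h_i$, so that the required primality condition $a_i + b_j = h_i + b_j \in \mathcal{P}$ for $1 \le i < j$ becomes the assertion that $b_j + h_1, \dots, b_j + h_{j-1}$ are all prime. I would then construct the sequence $(b_j)_{j \ge 1}$ recursively: choose $b_1$ arbitrarily (say $b_1 := 1$), and for $j \ge 2$, invoke Theorem \ref{main2} at length $k = j-1$ to conclude that $(h_1, \dots, h_{j-1})$ is prime-producing. This yields infinitely many $n$ with $n + h_1, \dots, n + h_{j-1}$ all prime, so I may pick any such $n$ strictly exceeding $b_{j-1}$ and set $b_j := n$. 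Both sequences are then strictly increasing, and the required primality holds by construction.

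For completeness, I would also record the converse derivation, since the text asserts the two formulations are equivalent: starting from sequences $(a_i), (b_j)$ as in Corollary \ref{half}, setting $h_i := a_i$ gives a strictly increasing sequence, and for each fixed $k$ the infinite family $\{b_j : j > k\}$ witnesses that $(h_1, \dots, h_k)$ is prime-producing, since each $n = b_j$ with $j > k$ makes $n + h_1, \dots, n + h_k$ simultaneously prime.

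There is essentially no obstacle in this deduction --- the corollary is a direct repackaging of Theorem \ref{main2}, and all the genuine difficulty lies in Theorem \ref{main2} itself. The only minor bookkeeping point is maintaining strict monotonicity of $(b_j)$, which is automatic from the fact that Theorem \ref{main2} supplies infinitely many admissible base points at each stage, so a sufficiently large one is always available.
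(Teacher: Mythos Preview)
Your proposal is correct and matches the paper's own proof essentially verbatim: the paper also sets $a_i := h_i$ from Theorem~\ref{main2} and then chooses the $b_j$ recursively, using the infinitude of base points at each stage to ensure strict monotonicity. Your added converse paragraph is likewise exactly the observation the paper records immediately after the proof.
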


\begin{proof}  Take $a_i=h_i$ to be the sequence from Theorem \ref{main2}.  By that theorem, for each $j$ there exist infinitely many $b_j$ such that $a_i + b_j$ is prime for all $1 \leq i < j$.  In particular, one can choose the $b_j$ to be increasing in $j$, and the claim follows.
\end{proof}

In the converse direction, it is immediate that Corollary \ref{half} implies Theorem \ref{main2}, since the $k$-tuple $(a_1,\dots,a_k)$ is clearly prime-producing for every $k$.  The abstract equivalence of these results (with the primes replaced by more general set) was previously observed in \cite{chou}.  In the language of \cite{chou}, these results assert that the primes are not an $R_W$-set, while in the language of \cite{ruppert}, they assert that the primes are not a translation-finite set\footnote{This answers a question of Yemon Choi in {\tt mathoverflow.net/questions/3347} in the negative.}. Inserting Theorem \ref{main2} into the results of \cite{ruppert}, we also conclude that there exist bounded functions supported on the primes that are not weakly almost periodic in $\ell^\infty(\Z)$.	 See \cite{choi} for further discussion of the properties of translation-finite sets.

Corollary \ref{half} also implies the result of Maynard \cite{maynard} that arbitrarily long prime-producing tuples exist.  Indeed, Theorem \ref{main2} will be established by adapting the sieve of Maynard \cite{maynard} (as modified in \cite{polymath}, \cite{bfm}), as well as using the intersectivity lemma of Bergelson \cite{berg}; we do so in Sections \ref{intersective-sec}, \ref{sieve-sec}.

The proof of Corollary \ref{half} in fact allows one to place the sequence $a_i$ inside any specified infinite admissible set; for instance one could require the $a_i$ to be odd square numbers if desired.  See Proposition \ref{inf-ad}.  As a consequence of this stronger statement, one can establish that the orbit closure of the primes is uncountable; see Corollary \ref{uncount}.

A similar result to Corollary \ref{half}, in which the $a_i+b_j$ are required to be sums of two squares rather than prime was recently\footnote{We thank James Maynard for this reference.} established by McGrath \cite[Theorem 1.4]{mcgrath}, with applications to the failure of quantum ergodicity for high dimensional flat tori \cite{jakobson}.  As with our result, one can place the $a_i$ inside any specified admissible set, such as (any constant multiple of) the set of odd squares, though for technical reasons the result in \cite{mcgrath} requires that the $a_i$ are divisible by $4$.  The methods of proof are similar (in particular relying on the Maynard sieve, the second moment method, and the pigeonhole principle), and so it seems likely that the arguments here could be adapted to give a slightly different\footnote{One technical difference is that the arguments in \cite{mcgrath} require \emph{asymptotics} for two-point correlations associated to a weight function adapted to sums of two squares, whereas for our approach \emph{upper bounds} on these two-point correlations suffice, mainly thanks to our use of the Bergelson intersectivity lemma which is not used in \cite{mcgrath}.} proof of the results in \cite{mcgrath} (using the half-dimensional version of the Maynard sieve developed in that paper).

\subsection{Notation}

We use $X = O(Y)$, $X \ll Y$, or $Y \gg X$ to denote the bound $|X| \leq CY$ for an absolute constant $C$, and if we say that $X = o(Y)$ as $N \to \infty$, we mean that $|X| \leq c(N) Y$ for some quantity $c(N)$ depending on $N$ (and possibly some other fixed parameters) that goes to zero as $N \to \infty$ (holding all other parameters fixed).

For any natural numbers $b,W$, we use $b\ (W)$ to denote the residue class $b+W\Z$ in the cyclic group $\Z/W\Z$.

\subsection{Acknowledgements}

The first author is supported by NSF grant DMS-1764034 and by a Simons Investigator Award.  The second author is supported by a grant from the Institute for Advanced Study and by ISF grant 2112/20.  Part of this research was conducted at the Institute for Advanced Study.  We thank Joel Moreira for discussions leading to Section \ref{remarks-sec}, Vitaly Bergelson for informing us of the reference \cite{dkkl}, James Maynard for informing us of the reference \cite{mcgrath}, Yemon Choi for supplying references on translation-finite sets, Mariusz Lem\'anczyk for pointing out a gap in a previous version of the proof of Corollary \ref{uncount}, Joel David Hamkins for providing an argument allowing us to strengthen that corollary, and Yemon Choi and Keiju Sono for corrections.

\section{Proof of Theorem \ref{main1}}\label{easy}

For any $k \geq 1$, define a \emph{good $k$-tuple} to be an increasing $k$-tuple $(p_1,\dots,p_k)$ of primes $3 < p_1 < \dots < p_k$ of primes larger than three, such that $p_i + p_j + 1$ is prime, $p_{i+1}-p_i >2$ for all $i\ge 1$, and $p_i$ does not divide $p_j+2$ for all $1 \leq i < j \leq k$.  For instance, any prime $p>3$ forms a good $1$-tuple $(p)$.  The key proposition to iterate is

\begin{proposition}[Extending a good tuple]  Assume Conjecture \ref{qhl}, and let $k \geq 1$.  Then any good $k$-tuple $(p_1,\dots,p_k)$ can be extended to a good $k+1$-tuple $(p_1,\dots,p_{k+1})$.
\end{proposition}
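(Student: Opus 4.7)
The plan is to apply Conjecture~\ref{qhl} to an admissible $(k+2)$-tuple obtained from the good $k$-tuple $(p_1,\dots,p_k)$ by prepending the shifts $0$ and $2$:
\[
H \;:=\; (0,\ 2,\ p_1+1,\ p_2+1,\ \dots,\ p_k+1).
\]
If $H$ is admissible, the conjecture supplies infinitely many $n$ for which $n$, $n+2$, and every $n+p_i+1$ are simultaneously prime. Picking any such $n$ with $n>p_k$ and setting $p_{k+1}:=n$, one has $p_{k+1}$ prime with $p_{k+1}>p_k>3$; every sum $p_i+p_{k+1}+1 = n+p_i+1$ is prime; and $p_{k+1}+2 = n+2$ is a prime strictly larger than each $p_i$, so $p_i\nmid p_{k+1}+2$ automatically. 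The remaining conditions for $(p_1,\dots,p_{k+1})$ to be good concern only pairs $(i,j)$ with $j\le k$ and are inherited from the hypothesis. The trick is precisely the inclusion of the extra entry~$2$ in $H$: it converts the awkward non-divisibility condition $p_i\nmid p_{k+1}+2$ into a primality condition that Conjecture~\ref{qhl} delivers for free.

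The substance of the argument is then the admissibility check for $H$, which I would break into cases on the prime $q$. For $q=2$ every entry of $H$ is even, so the class $1\bmod 2$ is avoided. For $q=3$, because each $p_j>3$ we have $p_j+1\in\{0,2\}\bmod 3$, and together with the entries $0,2$ the tuple misses $1\bmod 3$. For $q=p_i$, the entries reduce mod $p_i$ to $0$, $2$, $p_i+1\equiv 1$, and the values $p_j+1\bmod p_i$ for $j\neq i$; the hypothesis $p_i\nmid p_j+2$ gives $p_j+1\not\equiv -1\pmod{p_i}$ for $j\ne i$, while $0,1,2\not\equiv -1\pmod{p_i}$ since $p_i\ge 5$, so the class $-1\bmod p_i$ is avoided. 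For any other prime $q$ we have $q\notin\{2,3,p_1,\dots,p_k\}$, hence each $p_j\not\equiv 0\pmod q$, whence $p_j+1\not\equiv 1\pmod q$, and also $0,2\not\equiv 1\pmod q$ since $q\ge 5$; thus the class $1\bmod q$ is avoided.

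The only step that genuinely uses the full hypothesis is the case $q=p_i$, and this is exactly why the non-divisibility clause $p_i\nmid p_j+2$ was built into the definition of a good tuple in the first place. Once admissibility is confirmed, Conjecture~\ref{qhl} is applied off the shelf; any sufficiently large $n$ from the resulting infinite set yields the required extension $p_{k+1}=n$, closing the induction.
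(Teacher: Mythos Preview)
Your argument is exactly the paper's: form $H=(0,2,p_1+1,\dots,p_k+1)$, verify admissibility prime by prime, invoke Conjecture~\ref{qhl}, and set $p_{k+1}=n$ for some large $n$ produced. Your observation that $n+2$ being prime forces $p_i\nmid p_{k+1}+2$ is the same mechanism the paper relies on.

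There is one small slip in your admissibility check at $q=p_i$ (and the paper's write-up shares it): you appeal to ``the hypothesis $p_i\nmid p_j+2$'' for all $j\ne i$, but the definition of a good tuple only supplies this for $i<j$. For $j<i$ one has $0<p_j+2\le p_i$, so $p_i\mid p_j+2$ can occur precisely when $p_j+2=p_i$. For instance, $(p_1,p_2)=(5,7)$ is a good $2$-tuple, yet $p_1+1=6\equiv -1\pmod 7$, so $H=(0,2,6,8)$ does \emph{not} avoid $-1\bmod 7$. The tuple is still admissible (it misses $3\bmod 7$), and more generally this twin-prime sub-case can be handled separately, but as written the sentence ``the class $-1\bmod p_i$ is avoided'' needs a short patch for $j<i$.
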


\begin{proof}  We first verify that the $k+2$-tuple $(0, 2, p_1+1,\dots,p_k+1)$ is admissible.  If $p$ is a prime not equal to any of the $p_1,\dots,p_k$, then this $k+2$-tuple avoids the residue class $1 \hbox{ mod } p$.  If instead $p = p_i$ for some $1 \leq i \leq k$, then the $k+2$-tuple avoids the residue class $-1 \hbox{ mod } p_i$; this is clear for $0,2,p_1+1,\dots,p_i+1$ since $p_i > 3$, and also for $p_j+1$, $j>i$ since we are assuming that $p_i$ does not divide $p_j+2$.  This establishes admissibility. Applying Conjecture \ref{qhl}, we can find $n > p_k+2$ such that $n, n+2, n+p_1+1,\dots,n+p_k+1$ are all prime.  Setting $p_{k+1} \coloneqq n$, we conclude in particular that $p_1 < \dots < p_{k+1}$ is a good tuple, as claimed.
\end{proof}

Iterating this proposition starting from (say) $p_1=5$, we can find an infinite sequence $p_1 < p_2 < \dots$ of primes such that $p_i + p_j + 1$ is prime for all $1 \leq i < j$, and Theorem \ref{main1} follows.

\begin{remark}\label{dynam}  One can view the above construction using the dynamical systems framework of \cite{kmrr} (and indeed our arguments were initially inspired by this framework).  Let $X$ denote\footnote{This space is denoted $X_{1_{\mathcal P}}$ in \cite{dkkl}.} the orbit closure of the primes ${\mathcal P}$, that is to say the closure in the product topology of the shifts ${\mathcal P}+t, t \in \Z$ of the primes, viewed as elements of the Cantor space $2^\Z = \{ A: A \subset \Z\}$.  This is a compact space endowed with a shift homeomorphism $T \colon X \to X$ defined by $TA \coloneqq A-1$.  Using the quantitative form of the Hardy--Littlewood conjecture and standard upper bound sieves, $X$ can in fact be described explicitly as the collection of all shifts ${\mathcal P}+t$ of ${\mathcal P}$, together with the collection of all admissible tuples of integers (either finite or infinite).  Following \cite[Definition 2.1]{kmrr}, define an \emph{Erd\H{o}s progression} to be a triple $(x_0,x_1,x_2)$ of points $x_0,x_1,x_2 \in X$, such that there exists an increasing sequence $n_1 < n_2 < \dots$ of integers such that $T^{n_i} x_0 \to x_1$ and $T^{n_i} x_1 \to x_2$ as $i \to \infty$.  By repeating the arguments at the end of \cite[\S 2]{kmrr}, to establish the conclusion of Theorem \ref{main1}, it suffices to find an Erd\H{o}s progression $({\mathcal P}, x_1, x_2)$ with $x_1$ in the open cylinder set
$$ \{ A \subset \Z: 0 \in A \}$$
and $x_2$ in the open cylinder set
$$ \{ A \subset \Z: 1 \in A \}$$
(that is to say, $0 \in x_1$ and $1 \in x_2$).  One can check that the construction given above indeed generates an Erd\H{o}s progression $({\mathcal P}, x_1, x_2)$ with
$$ \{0, p_1+1, p_2+1, \dots\} \subset x_1$$
and
$$ \{1\} \subset x_2$$
(indeed, under a quantitative form of the Hardy--Littlewood conjecture and standard upper bound sieves, one can turn these inclusions into equalities).
\end{remark}

\section{An application of Bergelson's intersectivity lemma}\label{intersective-sec}

We now begin the proof of Theorem \ref{main2}.  We will need the following application of the Maynard sieve, which we will prove in later sections.

\begin{proposition}[Application of Maynard sieve]\label{mayprop}  Let $J_1,\dots,J_I \geq 2$ be natural numbers,
and let $(h_{i,j})_{1 \leq i \leq I; 1 \leq j \leq J_i}$ be an admissible $\sum_{i=1}^I J_i$-tuple.  Let $\theta_1,\dots,\theta_I > 0$ be real numbers with $\sum_{i=1}^I \theta_i \leq 1$.  Then if $N$ is sufficiently large depending on all previous parameters, there exists a probability measure $\nu$ on the integers in $[N,2N]$ such that
$$ \nu(\{ n:  n + h_{i,j} \in {\mathcal P} \}) \gg \theta_i \frac{\log J_i}{J_i}$$
for all $1 \leq i \leq I$ and $1 \leq j \leq J_I$, and
$$ \nu( \{ n: n + h_{i,j}, n + h_{i,j'} \in {\mathcal P} \} ) \ll \left(\theta_i \frac{\log J_i}{J_i}\right)^2$$
for all $1 \leq i \leq I$ and $1 \leq j < j' \leq J_I$.
\end{proposition}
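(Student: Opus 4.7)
The strategy is to realize $\nu$ as the normalization of a Maynard-type sieve weight on $[N,2N]$ whose sieving function $F$ is a tensor product $F=\bigotimes_{i=1}^I F_i$ across the $I$ blocks, with each $F_i$ supported on a simplex of size $\theta_i$. This tensor structure decouples the blocks, and the $\theta_i$-rescaling is exactly what produces the $\theta_i$ and $\theta_i^2$ factors in the required bounds.

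Setting $K:=\sum_i J_i$, I would begin with the standard W-trick: let $W:=\prod_{p\le w}p$ for a slowly growing $w=w(N)$, and use admissibility of the full $K$-tuple to pick a residue class $b\ (W)$ for which $b+h_{i,j}$ is coprime to $W$ for every $(i,j)$. For each block $i$ I would then fix a smooth nonnegative $G_i\colon[0,1]^{J_i}\to\R_{\ge 0}$ supported in the standard simplex $\{\sum_j y_j\le 1\}$ which nearly optimizes the Maynard sieve, so that in the standard functionals $I(G_i):=\int G_i^2$ and $J^{(j)}(G_i):=\int(\int_0^1 G_i\,dy_j)^2\prod_{k\ne j}dy_k$ one has $J^{(j)}(G_i)/I(G_i)\gg \log J_i/J_i$ for each $j$ (following \cite{maynard}, \cite{polymath}), and so that moreover the analogous 2-point functional satisfies $J^{(j,j')}(G_i)/I(G_i)\ll (\log J_i/J_i)^2$ for each $j\ne j'$ (a standard Cauchy--Schwarz/second-moment estimate on the Maynard optimizer). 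I would then rescale to $F_i(x):=G_i(x/\theta_i)$, supported on $\{\sum_j x_j\le\theta_i\}$, and take $F:=\bigotimes_i F_i$, which is supported in the standard $K$-simplex $\{\sum_{i,j} x_{i,j}\le 1\}$ by the hypothesis $\sum_i\theta_i\le 1$. Finally, with $R:=N^{1/4-o(1)}$, I would define
$$ w(n):=\mathbf{1}_{n\equiv b\,(W)}\!\left(\sum_{d_{i,j}\mid n+h_{i,j}}\prod_{i,j}\mu(d_{i,j})\, F\!\left(\tfrac{\log d_{1,1}}{\log R},\ldots,\tfrac{\log d_{I,J_I}}{\log R}\right)\right)^2 $$
and $\nu(\{n\}):=w(n)/\sum_{n'} w(n')$.

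The standard Maynard sieve asymptotics (cf.\ \cite{maynard}, \cite{polymath}, \cite{bfm}) then give $\sum_n w(n)\asymp \tfrac{N}{W}(\log R)^K I(F)$ and $\sum_n w(n)\mathbf{1}_{\mathcal P}(n+h_{i,j})\asymp \tfrac{N}{W\log N}(\log R)^{K+1}J^{(i,j)}(F)$. Since $F$ factors across blocks and each $F_i=G_i(\cdot/\theta_i)$, these integrals factor and a change of variables extracts exactly one factor of $\theta_i$ per singleton integration in block $i$; together with $\log R/\log N\asymp 1$ this yields the singleton lower bound $\nu(n+h_{i,j}\in\mathcal P)\gg \theta_i\log J_i/J_i$. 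For the pairwise term I would majorize one of the two prime indicators by a Selberg sieve at scale $R$, reducing the sum to a hybrid Maynard/Selberg expression that can be evaluated by the same techniques to produce the upper bound $\ll \tfrac{N}{W(\log N)^2}(\log R)^{K+2}J^{(i,j),(i,j')}(F)$; the rescaling now extracts two factors of $\theta_i$, and combined with the 2-point bound on $G_i$ this gives $\nu(n+h_{i,j},n+h_{i,j'}\in\mathcal P)\ll(\theta_i\log J_i/J_i)^2$, as required.

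The main technical obstacle is the clean execution of this pairwise upper bound, since one must handle two simultaneous primality constraints while preserving the block-factorization of the sieve weight. This step is where the level restriction $R<N^{1/4}$ is essential, so that Bombieri--Vinogradov applies to the single remaining prime after majorization; it also exploits crucially that we require only an upper bound rather than an asymptotic---exactly the simplification highlighted in the introduction when comparing with \cite{mcgrath}. The singleton computations are essentially on the shelf from the Maynard--Tao--Polymath literature, so the bulk of the new work is the 2-point calculation with the tensor-product sieving function on a union of rescaled simplices.
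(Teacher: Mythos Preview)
Your proposal is correct and follows essentially the same route as the paper: construct $\nu$ from a Maynard sieve weight built out of a tensor product $F=\bigotimes_i F_i(\cdot/\theta_i)$ of block-wise near-optimal functions (the paper pulls these from \cite[Lemma 4.6]{bfm}), extract the $\theta_i$ factors by change of variables, and handle the two-point upper bound by majorizing one prime indicator pointwise to reduce to a single-prime estimate (exactly the $1_{\mathcal P}(n+h_{i_2,j_2})w(n)\le\tilde w(n)$ trick the paper uses, following \cite[Lemma 4.5(iii)]{bfm}). The only cosmetic differences are that the paper works with the support condition $\sum_{(i,j)}\sup\{t:F_{\alpha,i,j}(t)\neq 0\}\le 1/10$ rather than the full simplex, and cites \cite{polymath} directly for the sieve asymptotics.
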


Let us assume this proposition for now and complete the proof of Theorem \ref{main2}. Set $J_i \coloneqq 2^{2^i}$ for all $i \geq 1$, let $\Sigma \subset \N^2$ denote the countably infinite index set
$$ \Sigma \coloneqq \{ (i,j) \in \N^2: 1 \leq j \leq J_i \}$$
and let $(h_{i,j})_{(i,j) \in \Sigma}$ be an infinite sequence of distinct odd squares, which we can assume to be increasing in the sense that $h_{i,j} < h_{i',j'}$ whenever $i < i'$.   The point of using odd squares is that any finite tuple of the $(h_{i,j})_{(i,j) \in \Sigma}$ is necessarily admissible, since for every odd prime $p$ there is at least one quadratic nonresidue mod $p$.
By the preceding proposition with $\theta_i \coloneqq 2^{-i}$, we can find a sequence $N_I$ going to infinity as $I \to \infty$, and a probability measure $\nu_I$ on the integers in $[N_I, 2N_I]$ for every $I$, such that for each $I$ we have
\begin{equation}\label{o1}
 \nu_I(\{ n:  n + h_{i,j} \in {\mathcal P} \}) \gg \frac{1}{J_i}
\end{equation}
for all $1 \leq i \leq I$ and $1 \leq j \leq J_i$, and
\begin{equation}\label{o2}
\nu_I( \{ n: n + h_{i,j}, n + h_{i,j'} \in {\mathcal P} \} ) \ll \frac{1}{J^2_i}
\end{equation}
for all $1 \leq i \leq I$ and $1 \leq j < j' \leq J_i$.

Next, we apply a variant of the Furstenberg correspondence principle \cite{furst} to pass to a suitable limit as $I \to \infty$. Consider the map $\phi \colon \N \to 2^{\Sigma}$ defined by
$$ \phi(n) \coloneqq \{ (i,j) \in \Sigma: n+h_{i,j} \in {\mathcal P} \},$$
and let $\mu_I \coloneqq \phi_* \nu_I$ be the (Borel) probability measure on the Cantor space $2^\Sigma$ formed by pushing forward $\nu_I$ by $\phi$; thus
$$ \mu_I(E) \coloneqq \nu_I \{ n: \phi(n) \in E \}$$
for any measurable $E \subset 2^{\Sigma}$.  In particular, if we let $E_{i,j} \subset 2^{\Sigma}$ denote the (clopen) events
$$ E_{i,j} \coloneqq \{ A \in 2^{\Sigma}: (i,j) \in A \} $$
then from \eqref{o1}, \eqref{o2} we have
\begin{equation}\label{p1}
 \mu_I( E_{i,j} ) \gg \frac{1}{J_i}
\end{equation}
for all $1 \leq i \leq I$ and $1 \leq j \leq J_i$, and
\begin{equation}\label{p2}
\mu_I( E_{i,j} \cap E_{i,j'} ) \ll \frac{1}{J^2_i}
\end{equation}
for all $1 \leq i \leq I$ and $1 \leq j < j' \leq J_i$.

By the Banach-Alaoglu theorem, there exists a (Borel) probability measure $\mu$ on $2^\Sigma$ that is a limit point (in the vague topology) of the $\mu_I$. In particular, from \eqref{p1}, \eqref{p2} we have
\begin{equation}\label{q1}
 \mu( E_{i,j}) \gg \frac{1}{J_i}
\end{equation}
for all $i \geq 1$ and $1 \leq j \leq J_i$, and
\begin{equation}\label{q2}
 \mu( E_{i,j} \cap E_{i,j'}) \ll \frac{1}{J^2_i}
\end{equation}
for all $i \geq 1$ and $1 \leq j < j' \leq J_i$.

Now we use the second moment method (as in \cite{bfm}).  For each $i \geq 1$, we have from \eqref{q1}, \eqref{q2} that
$$ \int_{2^\Sigma} \sum_{j=1}^{J_i} 1_{E_{i,j}}\ d\mu \gg \frac{J_i}{J_i} = 1$$
and
\begin{align*}
 \int_{2^\Sigma} \left(\sum_{j=1}^{J_i} 1_{E_{i,j}} \right)^2\ d\mu &\ll \frac{J_i^2}{J_i^2} + \int_{2^\Sigma} \sum_{j=1}^{J_i} 1_{E_{i,j}}\ d\mu \\
&\ll \int_{2^\Sigma} \sum_{j=1}^{J_i} 1_{E_{i,j}}\ d\mu.
\end{align*}
Since $\sum_{j=1}^{J_i} 1_{E_{i,j}}$ is supported on the event
$$ E_i \coloneqq \bigcup_{j=1}^{J_i} E_{i,j}$$
we have from Cauchy--Schwarz that
$$ \left(\int_{2^\Sigma} \sum_{j=1}^{J_i} 1_{E_{i,j}}\ d\mu\right)^2 \leq \mu(E_i) \int_{2^\Sigma} \left(\sum_{j=1}^{J_i} 1_{E_{i,j}} \right)^2\ d\mu.$$
Putting these inequalities together, we conclude that
$$ \mu(E_i) \gg 1$$
for all $i$.

Now we invoke the following result of Bergelson \cite{berg}:

\begin{lemma}[Bergelson intersectivity lemma]\label{bcl-1}\cite[Theorem 1.1]{berg}  Let $E_1,E_2,E_3,\dots$ be events in a probability space $(X,\mu)$ such that
$$
 \inf_i \mu(E_i) > 0.
$$
Then there exists $1 \leq i_1 < i_2 < \dots$ such that
$$ \mu(E_{i_1} \cap \dots \cap E_{i_k} ) > 0$$
for all $k$.  In fact one can take the set $\{i_1,i_2,\dots\}$ to have positive upper density.
\end{lemma}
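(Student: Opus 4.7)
The plan is a greedy induction: we will construct the subsequence $i_1 < i_2 < \cdots$ together with a decreasing chain $B_0 \supseteq B_1 \supseteq \cdots$ of positive-measure sets with $B_k \subseteq E_{i_1} \cap \cdots \cap E_{i_k}$. The key input is a soft compactness step producing a ``reservoir'' $B_0$ inside of which the density of ``admissible'' indices at each stage can be controlled uniformly in $k$.

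Write $\alpha \coloneqq \inf_i \mu(E_i) > 0$. The Ces\`aro averages $f_N \coloneqq N^{-1}\sum_{i=1}^N 1_{E_i}$ are bounded in $[0,1]$, so Banach--Alaoglu produces a subsequence $N_j \to \infty$ along which $f_{N_j} \to f$ weak-$*$ in $L^\infty(\mu)$ for some $f$ with $0 \le f \le 1$ and $\int f\,d\mu \ge \alpha$. Set $A \coloneqq \{f \ge \alpha/2\}$, so that $\mu(A) \ge \alpha/2$, and put $B_0 \coloneqq A$. Inductively, given $B_k \subseteq A$ of positive measure and $i_1 < \cdots < i_k$, test the weak-$*$ convergence against $1_{B_k} \in L^1(\mu)$ to obtain
\[
\lim_{j \to \infty} \frac{1}{N_j} \sum_{i=1}^{N_j} \mu(B_k \cap E_i) \;=\; \int_{B_k} f\,d\mu \;\ge\; \tfrac{\alpha}{2}\mu(B_k).
\]
Since each summand is at most $\mu(B_k)$, a one-line pigeonhole yields, for all $j$ sufficiently large, a fraction of at least $\alpha/4$ of the indices $i \in [1,N_j]$ with $\mu(B_k \cap E_i) \ge (\alpha/4)\mu(B_k)$; choose any such $i > i_k$ as $i_{k+1}$ and set $B_{k+1} \coloneqq B_k \cap E_{i_{k+1}}$, which remains of positive measure. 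This delivers $\mu(E_{i_1} \cap \cdots \cap E_{i_k}) \ge \mu(B_k) > 0$ for every $k$, giving the first assertion.

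For the positive upper density refinement, the crucial point is that the fraction $\alpha/4$ of admissible indices above is \emph{uniform} in $k$: it depends only on the fact that $\mu(B_k) > 0$, not on the actual value. Selecting $i_{k+1}$ greedily as the smallest admissible index beyond $i_k$ then keeps the gaps $i_{k+1} - i_k$ bounded in an averaged sense, which forces $\{i_1, i_2, \dots\}$ to have upper density at least a fixed positive constant. The main obstacle I anticipate is making this heuristic rigorous across the two quantifier alternations---the weak-$*$ limit in $j$ and the induction in $k$---because the threshold $N_j$ beyond which the pigeonhole takes effect depends on the set $B_k$ against which the averages are tested. The natural workaround is a diagonal extraction: choose the sequence $N_j$ along the induction so that the uniform admissibility density holds simultaneously for each $B_k$ already produced, and then read off the positive upper density of $\{i_1, i_2, \dots\}$ directly from the greedy rule.
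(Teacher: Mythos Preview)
The paper does not supply its own proof of this lemma; it is quoted directly from Bergelson's paper \cite{berg} and used as a black box. So there is no ``paper's proof'' to compare against, and I will simply assess your argument on its own merits.

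Your proof of the first assertion is correct. One small technicality: sequential weak-$*$ compactness in $L^\infty$ requires $L^1(\mu)$ to be separable, which is not assumed. This is easily repaired by restricting to the countably generated sub-$\sigma$-algebra $\sigma(E_1,E_2,\dots)$, where $L^1$ is separable and all the sets $B_k$ you construct already live. With that fix the inductive construction of the $B_k$ goes through exactly as you wrote.

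The positive upper density claim, however, has a genuine gap which your ``diagonal extraction'' sketch does not close. The obstacle you yourself identify is real: the scale $N_j$ at which the pigeonhole yields an $\alpha/4$-fraction of admissible indices depends on $B_k$, and $B_k$ depends on all previous choices $i_1,\dots,i_k$. Greedy selection therefore gives no control on $i_{k+1}$ beyond ``it exists'', and interleaving the choice of $N_j$ with the induction does not help, because after each single pick the set $B_k$ changes and with it the admissible set $G_k$; nothing prevents $i_k$ from growing arbitrarily fast. The weak-$*$ limit $f$ carries only integrated information, whereas the density statement is a \emph{pointwise} one.

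The clean fix (and essentially Bergelson's original argument) is to replace the weak-$*$ limit by the pointwise function $g \coloneqq \limsup_{N\to\infty} f_N$. Reverse Fatou gives $\int g\,d\mu \ge \alpha$, so $A \coloneqq \{g \ge \alpha/2\}$ has $\mu(A) \ge \alpha/2$. Now discard the null set
\[
Z \;\coloneqq\; \bigcup \Bigl\{\, \textstyle\bigcap_{i\in F} E_i \;:\; F \subset \N \text{ finite},\ \mu\bigl(\bigcap_{i\in F} E_i\bigr)=0 \,\Bigr\},
\]
which is a countable union of null sets. Pick any point $x \in A \setminus Z$ and set $S \coloneqq \{i : x \in E_i\}$. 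By definition of $g$, the set $S$ has upper density at least $\alpha/2$; and for every finite $F \subset S$ one has $x \in \bigcap_{i\in F} E_i$ while $x \notin Z$, forcing $\mu\bigl(\bigcap_{i\in F} E_i\bigr) > 0$. Enumerating $S$ in increasing order gives the desired $i_1 < i_2 < \cdots$, with the upper-density bound for free. This avoids the quantifier entanglement entirely: no induction on $k$, no choice of subsequence $N_j$, just one point $x$ chosen after a single measure-zero excision.
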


Applying this lemma, we can find $1 \leq i_1 < i_2 < \dots$ such that
$$ \mu(E_{i_1} \cap \dots \cap E_{i_k} ) > 0$$
for all $k$.  By repeated application of the pigeonhole principle, we may thus find $1 \leq j_r \leq J_{i_r}$ for all $r \geq 1$ such that
$$ \mu(E_{i_1,j_1} \cap \dots \cap E_{i_k,j_k} ) > 0$$
for all $k$.  For each such $k$, we conclude from vague convergence that
$$ \mu_I(E_{i_1,j_1} \cap \dots \cap E_{i_k,j_k} ) > 0$$
for infinitely many $I$.  This implies that for infinitely many $I$, there exists $n \in [N_I, 2N_I]$ such that
$$ n+h_{i_1,j_1}, \dots, n + h_{i_k,j_k} \in {\mathcal P}.$$
Since the $N_I$ go to infinity as $I \to \infty$, we conclude that the tuple $(h_{i_1,j_1},\dots,h_{i_k,j_k})$ is prime-producing for every $k$.  This establishes Theorem \ref{main2} assuming Proposition \ref{mayprop}.

It remains to establish Proposition \ref{mayprop}.  This will be done in the next section.

\begin{remark}  We did not utilize the conclusion that $\{i_1,i_2,\dots\}$ had positive upper density, or equivalently that $i_k = O(k)$ for infinitely many $k$.  If one does so, and also optimizes the values of $\theta_i, J_i$, one can ensure\footnote{We thank Freddie Manners and Vitaly Bergelson for suggesting this refinement.} that for any admissible $h_1 < h_2 < \dots$, there exists a subsequence $h_{i_1} < h_{i_2} < \dots$ with $i_k \leq \exp(k^{1+o(1)})$ for infinitely many $k$, such that $h_{i_1},\dots,h_{i_k}$ is prime-producing for every $k$.  Setting the $h_i$ to be the odd squares (for instance), we can thus ensure that the sequence $a_i$ in Theorem \ref{half} obeys the bound $a_i \leq \exp(i^{1+o(1)})$ for infinitely many $i$; we leave the details to the interested reader.  Any significant quantitative improvement to the Maynard sieve would lead to improved bounds on the $i_k$ or the $a_i$.
\end{remark}

\section{The Maynard sieve}\label{sieve-sec}

We first present a version of the Maynard sieve \cite{maynard} which is a slight variant of the version presented in \cite{bfm}.
Let $J_1,\dots,J_I, \theta_i, h_{i,j}, N$ be as in Proposition \ref{mayprop}.  We define $\Sigma_I \subset \N^2$ to be the finite set
$$ \Sigma_I \coloneqq \{ (i,j) \in \N^2: i \leq I; j \leq J_i \}.$$
Let $F \colon [0,+\infty)^\Sigma_I \to \R$ be a smooth compactly supported function be chosen later, that is assumed to be not identically zero and of the form
\begin{equation}\label{ftt}
 F( (t_{(i,j)})_{(i,j) \in \Sigma_I}) = \sum_{\alpha \in A} \prod_{(i,j) \in \Sigma_I} F_{\alpha,i,j}(t_{i,j})
\end{equation}
for some finite index set $A$ and some smooth compactly supported functions $F_{\alpha,i,j}: [0,+\infty) \to \R$ such that
\begin{equation}\label{ftt-supp}
 \sum_{(i,j) \in \Sigma_I} \sup \{t: F_{\alpha,i,j}(t) \neq 0\} \leq \frac{1}{10}.
\end{equation}
We define the sieve weights
\begin{equation}\label{lad}
 \lambda_{(d_{i,j})_{(i,j) \in \Sigma_I}} \coloneqq \prod_{(i,j) \in \Sigma_I} \mu(d_{i,j}) \sum_{\alpha \in A} \prod_{(i,j) \in \Sigma_I} F'_{\alpha,i,j}\left( \frac{\log d_{i,j}}{\log N}\right)
\end{equation}
and then define the integral quantity
$$ I(F) \coloneqq \int_0^\infty \dots \int_0^\infty F( (t_{i,j})_{(i,j) \in \Sigma_I} )\ \prod_{(i,j) \in \Sigma_I} dt_{i,j}.$$
Also, for any $(i_1,j_1) \in \Sigma_I$, we define the integral quantity
$$ J_{(i_1,j_1)}(F) \coloneqq \int_0^\infty \dots \int_0^\infty \left(\int_0^\infty  F( (t_{i,j})_{(i,j) \in \Sigma_I} )\ dt_{i_1,j_1}\right)^2\ \prod_{(i,j) \in \Sigma_I \backslash \{(i_1,j_1)\}} dt_{i,j},$$
and similarly for any distinct $(i_1,j_1), (i_2,j_2) \in \Sigma_I$, we define the integral quantity
$$ L_{(i_1,j_1), (i_2,j_2)}(F) \coloneqq \int_0^\infty \dots \int_0^\infty \left(\int_0^\infty \int_0^\infty  F( (t_{i,j})_{(i,j) \in \Sigma_I} )\ dt_{i_1,j_1}dt_{i_2,j_2} \right)^2\ \prod_{(i,j) \in \Sigma_I \backslash \{(i_1,j_1), (i_2,j_2)\}} dt_{i,j}.$$
We set\footnote{One can in fact work with significantly larger values of $W$ if desired (as large as a small power of $N$), as long as one excludes primes associated with a potential exceptional modulus: see \cite{bfm}.}
$$ W \coloneqq \prod_{p \leq \log\log\log N} p.$$
Since $(h_{i,j})_{(i,j) \in \Sigma_I}$ is admissible, we see from the Chinese remainder theorem that we may find a residue class $b\ (W)$ (depending on $N$, of course) such that $b+h_{i,j}\ (W)$ is a primitive residue class for all $(i,j) \in \Sigma_I$.  Fix this choice of $b$.  We let
$$ k \coloneqq \sum_{i=1}^I J_i$$
denote the cardinality of $\Sigma_I$, and introduce the quantity
$$ B \coloneqq \frac{\phi(W)}{W} \log N.$$

The following proposition is a slight variant of \cite[Lemma 4.5]{bfm}, and is proven in essentially the same fashion.

\begin{proposition}[Maynard sieve]  If $w \colon \N \to \R^+$ denotes the sieve weight
$$ w(n) \coloneqq 1_{N < n \leq 2N: n = b \ (W)} \left(\sum_{(d_{i,j})_{(i,j) \in \Sigma_I}: d_{i,j}|n+h_{i,j} \forall (i,j) \in \Sigma_I} \lambda_{(d_{i,j})_{(i,j) \in \Sigma_I}}\right)^2$$
then one has the estimates
\begin{align}
\sum_n w(n) &= (1+o(1)) \frac{N}{W} B^{-k} I(F) \label{may-1}\\
\sum_n 1_{\mathcal P}(n+h_{(i_1,j_1)}) w(n) &= (1+o(1)) \frac{N}{W} B^{-k} J_{(i_1,j_1)}(F) \label{may-2}\\
\sum_n 1_{\mathcal P}(n+h_{(i_1,j_1)}) 1_{\mathcal P}(n+h_{(i_2,j_2)}) w(n) &\ll \frac{N}{W} B^{-k} L_{(i_1,j_1), (i_2,j_2)}(F) \label{may-3}
\end{align}
for all distinct $(i_1,j_1), (i_2,j_2) \in \Sigma_I$ in the limit as $N \to \infty$ (keeping all other parameters fixed).
\end{proposition}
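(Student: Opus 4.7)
The plan is to follow the standard Maynard sieve computation from \cite[Lemma 4.5]{bfm}, which applies with only cosmetic changes to our double-indexed setting $(i,j) \in \Sigma_I$. All three estimates share a common template: open the square in $w(n)$ to produce a double sum over divisor tuples $\mathbf{d} = (d_{i,j})$ and $\mathbf{e} = (e_{i,j})$, weighted by $\lambda_{\mathbf{d}} \lambda_{\mathbf{e}}$; apply the Chinese remainder theorem to evaluate the inner count over $n \in (N,2N]$ satisfying the divisibility constraints together with $n \equiv b\ (W)$; and then perform the standard Maynard change of variables
$$
 y_{\mathbf{r}} \;\coloneqq\; \prod_{(i,j) \in \Sigma_I} \phi(r_{i,j}) \sum_{\mathbf{d}:\, r_{i,j}\mid d_{i,j}\,\forall (i,j)} \frac{\lambda_{\mathbf{d}}}{\prod_{(i,j)} d_{i,j}}
$$
which diagonalizes the resulting quadratic form. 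The ansatz \eqref{lad} is designed precisely so that $y_{\mathbf{r}}$ is asymptotically $\sum_{\alpha \in A} \prod_{(i,j)} F_{\alpha,i,j}(\log r_{i,j}/\log N)$, and the sum $\sum_{\mathbf{r}} y_{\mathbf{r}}^2 / \prod \phi(r_{i,j})$ becomes a Riemann approximation of the appropriate multi-dimensional integral of $F$.

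For \eqref{may-1}, the inner count equals $N/(W \prod_{(i,j)} [d_{i,j},e_{i,j}]) + O(1)$ by CRT; the support hypothesis \eqref{ftt-supp} forces $\prod_{(i,j)} [d_{i,j}, e_{i,j}] \leq N^{1/5}$, so the error term is negligible, and the diagonalization yields the main term $\frac{N}{W} B^{-k} I(F)(1+o(1))$. For \eqref{may-2}, the factor $1_{\mathcal P}(n + h_{(i_1,j_1)})$ together with $d_{i_1,j_1} \mid n + h_{(i_1,j_1)}$ forces $d_{i_1,j_1} = e_{i_1,j_1} = 1$ (all other contributions being negligible since $n+h_{(i_1,j_1)}$ would have to equal a small prime), and the remaining count is a sum of $1_{\mathcal P}$ over an arithmetic progression of modulus at most $W \cdot N^{1/5}$. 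This lies safely inside the Bombieri--Vinogradov range, giving the expected asymptotic with acceptable error on average over moduli; the diagonalization then produces an extra factor $B^{-1}$ and leaves the integration in $t_{i_1,j_1}$ as an inner integral appearing squared, exactly matching the definition of $J_{(i_1,j_1)}(F)$.

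The main obstacle is \eqref{may-3}, the two-prime estimate, since no unconditional asymptotic is available for the two-point prime correlation $\sum_n 1_{\mathcal P}(n+h_{(i_1,j_1)}) 1_{\mathcal P}(n+h_{(i_2,j_2)})$ in arithmetic progressions. Fortunately only an upper bound is needed, and a standard two-dimensional Selberg upper bound sieve supplies
$$
 \sum_{\substack{N < n \leq 2N \\ n \equiv b'\,(q)}} 1_{\mathcal P}(n + h_{(i_1,j_1)}) 1_{\mathcal P}(n + h_{(i_2,j_2)}) \;\ll\; \frac{N}{\phi(q) (\log N)^2}
$$
times an appropriate local factor depending on $h_{(i_1,j_1)} - h_{(i_2,j_2)}$, uniformly for $q \leq N^{1/5}$ and admissible $b'$. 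Inserting this estimate in place of the asymptotic (again with $d_{i_1,j_1} = e_{i_1,j_1} = d_{i_2,j_2} = e_{i_2,j_2} = 1$ forced) and applying the same diagonalization produces an extra factor $B^{-2}$ relative to \eqref{may-1} along with a double inner integration in $t_{i_1,j_1}$ and $t_{i_2,j_2}$, giving exactly $\frac{N}{W} B^{-k} L_{(i_1,j_1),(i_2,j_2)}(F)$ up to an admissible multiplicative constant. The only genuinely new feature beyond \cite{bfm} is the double indexing $(i,j) \in \Sigma_I$; since all sieve manipulations factor multiplicatively across $\Sigma_I$, this amounts to a notational relabelling of the single-index calculation.
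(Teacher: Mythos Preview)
Your argument is correct and, for \eqref{may-1} and \eqref{may-2}, is essentially identical to the paper's (which simply cites the corresponding computations in \cite{polymath}). For \eqref{may-3}, however, the paper takes a different route. Rather than invoking a two-dimensional Selberg upper bound for the correlation $\sum_n 1_{\mathcal P}(n+h_{(i_1,j_1)}) 1_{\mathcal P}(n+h_{(i_2,j_2)})$ in arithmetic progressions, the paper (following \cite[Lemma 4.5(iii)]{bfm}) observes the pointwise inequality $1_{\mathcal P}(n+h_{(i_2,j_2)}) w(n) \leq \tilde w(n)$, where $\tilde w$ is the sieve weight obtained from $w$ by replacing each $F_{\alpha,i_2,j_2}$ with the function $t \mapsto F_{\alpha,i_2,j_2}(0) G(t)$ for a fixed smooth cutoff $G$ supported on $[0,1/10]$ with $G(0)=1$. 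This reduces \eqref{may-3} to an instance of \eqref{may-2} applied to $\tilde w$, and the $L$-integral drops out automatically from the form of $\tilde F$. The paper's approach is more self-contained, needing only the single-prime input (Bombieri--Vinogradov) already used for \eqref{may-2}; your approach is also perfectly valid but requires the Selberg upper bound as an additional external input, together with the observation that the singular-series factor attached to $h_{(i_1,j_1)}-h_{(i_2,j_2)}$ is a fixed constant and can be absorbed into the implied constant.
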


\begin{proof}(Sketch)  The first asymptotic \eqref{may-1} follows from expanding out the weight $w(n)$ and applying \cite[Theorem 3.6(i)]{polymath} much as in the calculations after \cite[(85)]{polymath}.  The second asymptotic \eqref{may-2} similarly follows from \cite[Theorem 3.5(i)]{polymath} (with $\theta=1/2$, as per the Bomberi--Vinogradov inequality), again following the calculations after \cite[(85)]{polymath}.  To obtain the final upper bound \eqref{may-3}, we use the argument from the proof of \cite[Lemma 4.5(iii)]{bfm}, namely we note the pointwise bound
$$ 1_{\mathcal P}(n+h_{(i_2,j_2)}) w(n) \leq \tilde w(n)$$
where $\tilde w$ is defined as in $w$, except that the functions $F_{\alpha,i_2,j_2}$ used (via \eqref{lad}) to define $w$ are replaced with the function $\tilde F_{\alpha,i_2,j_2}$ defined by
$$\tilde F_{\alpha,i_2,j_2}(t) \coloneqq F_{\alpha,i_2,j_2}(0) G(t)$$
where $G \colon [0,+\infty) \to \R$ is a fixed smooth function supported on $[0,1/10]$ which equals $1$ at the origin (the exact choice of $G$ is not important as we are not trying to optimize the implied constant in \eqref{may-3}).  Inserting this bound, we can bound the left-hand side of \eqref{may-3} by the left-hand side of \eqref{may-2} (with $w$ replaced by $\tilde w$), and then by repeating the calculations used to prove \eqref{may-2} (as in the proof of \cite[Lemma 4.5(iii)]{bfm}) we obtain the desired bound.
\end{proof}

As a corollary of this proposition, if we take $\nu$ to be the probability measure on $[N,2N]$ with density
$$ \frac{w(n)}{\sum_n w(n)},$$
then for $N$ sufficiently large, we have
\begin{equation}\label{sam-1}
 \nu(\{ n:  n + h_{i_1,j_1} \in {\mathcal P} \}) \gg \frac{J_{(i_1,j_1)}(F) }{I(F)}
\end{equation}
and
\begin{equation}\label{sam-2}
 \nu( \{ n: n + h_{i,j}, n + h_{i,j'} \in {\mathcal P} \} ) \ll \frac{L_{(i_1,j_1), (i_2,j_2)}(F)}{I(F)}
\end{equation}
for all distinct $(i_1,j_1), (i_2,j_2) \in \Sigma_I$.

For any fixed $i$, we can use an argument from \cite{bfm} (which in turn is based on calculations in \cite{maynard}).
To use these bounds, we use a further lemma from \cite{bfm} to construct a suitable preliminary weight function $F_i$ for each index $i$.

\begin{lemma}[Maynard sieve weight]  For each $i=1,\dots,I$, there exists a smooth compactly supported function $F_i: [0,+\infty)^{J_i} \to \R$, not identically zero, of the form
$$ F_i( t_{i,1},\dots,t_{i,J_i}) = \sum_{\alpha \in A_i} \prod_{j=1}^{J_i} \tilde F_{\alpha,i,j}(t_{i,j})$$
for some smooth compactly supported $\tilde F_{\alpha,i,j} \colon [0,+\infty) \to \R^+$ and some finite index set $A_i$, such that
$$ \sum_{j=1}^{J_i} \sup \{t: F_{\alpha,i,j}(t) \neq 0\} \leq \frac{1}{10}$$
for all $\alpha \in A_i$, and such that
$$ J_{(i,j)}(F_i) \gg \frac{\log J_i}{J_i} I(F_i)$$
and
$$ L_{(i,j), (i,j')}(F_i) \ll \left(\frac{\log J_i}{J_i}\right)^2 I(F_i)$$
for all distinct $j,j' \in \{1,\dots,J_i\}$, where the quantities $I(F_i), J_{(i,j)}(F_i), L_{(i,j), (i,j')}(F_i)$ are defined similarly to $I(F), J_{(i_1,j_1)}(F), L_{(i_1,j_1), (i_2,j_2)}(F)$ but with the index set $\Sigma_I$ replaced by the slice $\{ (i,j): 1 \leq j \leq J_i\}$.
\end{lemma}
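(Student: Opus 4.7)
The plan is to construct $F_i$ directly from the Maynard sieve weight of \cite{maynard}, in the form refined by \cite{bfm}, rescaled so that the support condition is automatic. Fix $i$ and abbreviate $k = J_i$, dropping the $i$ subscript. The task becomes: build a smooth compactly supported $F: [0,\infty)^k \to \R$, of sum-of-tensor-products form, satisfying
$$J_{(j)}(F) \gg \frac{\log k}{k}\, I(F) \quad\text{and}\quad L_{(j,j')}(F) \ll \left(\frac{\log k}{k}\right)^2 I(F)$$
for all distinct $j, j' \in \{1,\dots,k\}$.

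The candidate is a rescaled symmetric Maynard sieve weight. Choose a smooth non-negative bump $\eta: [0,\infty) \to [0,1]$ with $\eta(0) > 0$ supported in $[0, 1/(20k)]$, and a smooth profile $g: [0,\infty) \to \R$ to be specified. Set
$$F(t_1, \ldots, t_k) \coloneqq g\!\left(\sum_{j=1}^k t_j\right) \prod_{j=1}^k \eta(t_j).$$
The factor $\eta(t_j)$ confines each coordinate to $[0, 1/(20k)]$, so any tensor-product decomposition of $F$ has each summand with coordinate supports summing to at most $k \cdot 1/(20k) = 1/20 \leq 1/10$. To obtain such a decomposition, I would approximate $g(\sum_j t_j)$ on the bounded region of interest uniformly by a polynomial in $t_1,\ldots,t_k$; the monomial expansion of this polynomial, multiplied by $\prod_j \eta(t_j)$, furnishes the required finite representation $F = \sum_\alpha \prod_j \tilde F_{\alpha,j}(t_j)$, with each $\tilde F_{\alpha,j}$ equal to $\eta$ times a monomial.

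Computing $I(F)$, $J_{(j)}(F)$, $L_{(j,j')}(F)$ then reduces to one-dimensional integrals of $g$: changing variables to $s = \sum_j t_j$ on the simplex of side $\sim 1/k$ produces weight factors like $s^{k-1}/(k-1)!$, and these are exactly the Maynard functionals whose optimization is carried out in \cite{maynard} and simplified in \cite{polymath, bfm}. With $g$ chosen as in those references (e.g., a smooth truncation of $s \mapsto 1/(1+As)^2$ with $A$ of order $\log k$), the stated lower bound on $J_{(j)}/I$ and upper bound on $L_{(j,j')}/I$ fall out of the same computation that underlies Maynard's theorem on prime-producing tuples.

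The main obstacle is not conceptual; the argument is essentially a transcription of \cite[Lemma 4.5]{bfm} into the present notation. The only verification needed is that (a) the polynomial approximation of $g$ preserves the Maynard gain, which holds once the precision is taken smaller than $(\log k/k)^2$, and (b) the rescaling into $[0, 1/(20k)]^k$ leaves the ratios $J_{(j)}/I$ and $L_{(j,j')}/I$ invariant, which follows because those ratios are homogeneous of degree zero under simultaneous dilation of all coordinates. Everything else is the explicit simplex-integral computation already present in \cite{maynard, polymath, bfm}.
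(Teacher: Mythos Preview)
The paper's proof is a one-line citation of \cite[Lemma 4.6]{bfm} (after noting the claim is trivial for bounded $J_i$). Your idea of reconstructing that weight is in the same spirit, but the execution has two real problems.

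First, claim (b) is simply false. Writing $k$ for $J_i$ and $F_\lambda(t)\coloneqq F(\lambda t)$, one has $I(F_\lambda)=\lambda^{-k}I(F)$, while the extra inner integration in $J$ and $L$ produces $J_{(j)}(F_\lambda)=\lambda^{-(k+1)}J_{(j)}(F)$ and $L_{(j,j')}(F_\lambda)=\lambda^{-(k+2)}L_{(j,j')}(F)$. Thus $J/I$ and $L/I$ scale like $\lambda^{-1}$ and $\lambda^{-2}$, not $\lambda^{0}$; a rescaling by a factor comparable to $k$ would wipe out the $\log k/k$ gain.

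Second, and more seriously, your ansatz $F(t)=g\bigl(\sum_j t_j\bigr)\prod_j\eta(t_j)$ with a generic bump $\eta$ is the Goldston--Pintz--Y{\i}ld{\i}r{\i}m shape (a function of the sum, cut off to a box), not the Maynard shape. For functions of $\sum_j t_j$ the ratio $J_{(j)}/I$ is bounded by $O(1/k)$ --- this is exactly the GPY barrier, and no choice of one-variable profile $g$ (such as $s\mapsto(1+As)^{-2}$) beats it. The $\log k$ improvement in \cite{maynard} comes instead from the genuinely multidimensional product weight
\[
F(t_1,\dots,t_k)=\prod_{j=1}^{k} g(k t_j),
\]
with $g$ (for instance a smooth truncation of $t\mapsto(1+At)^{-1}$, $A\asymp\log k$) supported on an interval of length $O(1)$. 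Here each factor is already supported in $[0,O(1/k)]$, so the support condition $\sum_j\sup\{\cdot\}\le 1/10$ holds after a harmless $O(1)$ rescaling, the tensor-product representation is immediate (no polynomial approximation is needed), and the computation of $J/I$ and $L/I$ is the one carried out in \cite{maynard}, \cite{polymath}, and \cite[Lemma~4.6]{bfm} (your reference to Lemma~4.5 there is to the sieve asymptotics, not to the weight construction).
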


\begin{proof}  One can assume $J_i$ to be large, as the claim is trivial for bounded $J_i \geq 2$.  The claim now follows from \cite[Lemma 4.6]{bfm} (with $k=J_i$ and $\rho = \delta = 1/10$, say) and some obvious relabeling.
\end{proof}

If one now defines
$$ F( (t_{i,j})_{(i,j) \in \Sigma_I} ) \coloneqq \prod_{i=1}^I F_i( t_{i,1}/\theta_i,\dots, t_{i,J_i}/\theta_i )$$
then one easily verifies that $F$ is of the required form \eqref{ftt} (with the condition \eqref{ftt-supp} being obeyed), and from Fubini's theorem and a change of variables one has
$$ J_{(i,j)}(F) \gg \theta_i \frac{\log J_i}{J_i} I(F)$$
and
$$ L_{(i,j), (i,j')}(F) \ll \left(\theta_i \frac{\log J_i}{J_i}\right)^2 I(F)$$
for all distinct $(i,j), (i,j') \in \Sigma_I$.  Inserting these bounds into \eqref{sam-1}, \eqref{sam-2}, we obtain Proposition \ref{mayprop} and hence Theorem \ref{main2}.

\section{Consequences for the orbit closure of the primes}\label{remarks-sec}

We thank Joel Moreira for suggesting the following remarks.

The orbit closure $X$ of the primes in Remark \ref{dynam} is clearly contained in the union of the set $T^\Z {\mathcal P} = \{ {\mathcal P} + t: t \in \Z \}$ of finite translates of the primes, and the collection ${\mathcal A}$ of all (finite and infinite) admissible subsets of $\Z$, these sets are disjoint since the primes are not admissible (they do not avoid any residue class mod $2$).  Observe that a finite tuple is prime-producing if and only if it is contained in an element of $X \cap {\mathcal A}$. As mentioned in the previous remark, the quantitative form of the Hardy--Littlewood prime tuples conjecture would imply (using standard upper bound sieves) that in fact $X = T^\Z {\mathcal P} \cup {\mathcal A}$; see also \cite[Remark 1.2]{dkkl} for an alternate proof.  Of course, this statement is out of reach unconditionally; even showing that $\{0,2\}$ for instance was in $X$ would imply the twin prime conjecture (and is morally equivalent to it).  However, our methods of proof do show

\begin{proposition}\label{inf-ad}  Let $A$ be an infinite admissible set of integers.  Then there exists an infinite subset $A'$ of $A$ that is contained in $X$.  In particular, $X$ contains at least one infinite admissible set.
\end{proposition}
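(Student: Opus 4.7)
The plan is to run the argument used to prove Theorem \ref{main2} with the odd squares replaced by an enumeration $(h_{i,j})_{(i,j)\in \Sigma}$ of the given infinite admissible set $A$ (keeping the block sizes $J_i := 2^{2^i}$ and the weights $\theta_i := 2^{-i}$). Since every subset of an admissible set is admissible, Proposition \ref{mayprop} applies to this new tuple and produces, for every $I$, a probability measure $\nu_I$ on the integers in $[N_I, 2N_I]$ satisfying the lower and upper bounds \eqref{o1}, \eqref{o2}.

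The key new twist is to push forward into $2^{\Z}$ rather than $2^\Sigma$: I would define $\phi(n) := \mathcal{P} - n$, so that $\mu_I := \phi_* \nu_I$ is a Borel probability measure on $2^\Z$ supported on the $\Z$-orbit $T^\Z \mathcal{P} \subseteq X$, and then extract (via Banach--Alaoglu) a vague limit $\mu$ of the sequence $(\mu_I)_I$. Since $X$ is closed, $\mu$ is also supported on $X$. The Maynard-sieve estimates of Proposition \ref{mayprop} translate into the bounds $\mu(E_{h_{i,j}}) \gg 1/J_i$ and $\mu(E_{h_{i,j}} \cap E_{h_{i,j'}}) \ll 1/J_i^2$ for $j \neq j'$, where $E_h := \{A'' \subseteq \Z : h \in A''\}$ is the usual clopen cylinder set.

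The crucial additional ingredient would be an off-tuple upper bound: for each fixed $h \in \Z \setminus A$, a routine upper-bound sieve calculation (modelled on the proof of \eqref{may-3}) should give
$$\mu_I(E_h) = \nu_I(\{n : n + h \in \mathcal{P}\}) \ll \frac{1}{\log N_I},$$
which tends to $0$ as $I \to \infty$. Since $E_h$ is clopen, this forces $\mu(E_h) = 0$ for every $h \in \Z \setminus A$, and countable subadditivity then gives that $\mu$ is supported on the closed set $X \cap 2^A$. Now I would repeat the second moment / Bergelson / pigeonhole argument from Section \ref{intersective-sec} verbatim to extract a subsequence $(i_k, j_k)_{k \ge 1}$ such that $\mu(\bigcap_{k' \le k} E_{h_{i_{k'}, j_{k'}}}) > 0$ for every $k$. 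Setting $C_k := X \cap 2^A \cap \bigcap_{k' \le k} E_{h_{i_{k'}, j_{k'}}}$, each $C_k$ is closed and has positive $\mu$-measure (since $\mu$ lives on $X \cap 2^A$), hence is nonempty; the $C_k$ form a decreasing family of closed subsets of the compact space $2^\Z$, so $\bigcap_k C_k$ is nonempty. Any $A'$ in this intersection is an element of $X$, is contained in $A$, and contains the infinite set $\{h_{i_k, j_k} : k \ge 1\}$, as required.

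The main obstacle will be justifying the off-tuple sieve upper bound, which is not literally part of Proposition \ref{mayprop}. It should follow from a standard Selberg/Maynard-type upper bound sieve applied to the weight $1_{\mathcal P}(n+h) w(n)$ and captures the heuristic that the sieve weight $w$ is tailored to detect primes only at the distinguished shifts $h_{i,j}$; once this estimate is in place, all remaining steps are cosmetic modifications of the proofs of Theorem \ref{main2} and Proposition \ref{mayprop}, engineered so that the limit measure $\mu$ lives on $X \cap 2^A$ rather than on all of $2^\Sigma$.
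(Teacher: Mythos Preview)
Your proposal is correct and matches the paper's own argument: both rerun the proof of Theorem \ref{main2} with the $h_{i,j}$ drawn from $A$, and both identify the one genuinely new input as an off-tuple upper bound sieve estimate $\nu_I(\{n:n+h\in\mathcal P\})=o(1)$ for $h$ outside the tuple, which forces the limit object to live inside $2^A$. The only cosmetic difference is packaging: you push forward directly to $2^{\Z}$ via $n\mapsto\mathcal P-n$ and finish with a compactness argument on $\operatorname{supp}\mu\subset X\cap 2^A$, whereas the paper stays in $2^{\Sigma}$, returns to concrete $n$'s with $n+h_{i_r,j_r}\in\mathcal P$ and $n+h\notin\mathcal P$ for $h\in\{-k,\dots,k\}\setminus A$, and only then takes a weak limit of $\mathcal P-n$ in $2^{\Z}$.
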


This can be compared with the result of Maynard \cite{maynard} that any finite admissible $k$-tuple contains a prime-producing subtuple of cardinality $\gg \log k$.

\begin{proof}  (Sketch) Repeat the proof of Theorem \ref{main2}, but with all the $h_{i,j}$ chosen from $A$ (rather than from the odd squares).  The Maynard sieve calculations used to prove Proposition \ref{mayprop} also yield the additional bound
$$ \nu(\{ n:  n + h \in {\mathcal P} \}) = o(1)$$
as $N \to \infty$ for any fixed $h$ equal to any of the $h_{i,j}$; in fact the right-hand side can be replaced with $O(B^{-1})$ (where the implied constants can depend on the parameters $I$, $J_i$, $h_{i,j}$, $\theta_i$).  The proof of Theorem \ref{main2} then produces an infinite sequence $h_{i_1,j_1},h_{i_2,j_2},\dots$ in $A$ with the property that for any $k$, there exist infinitely many $n$ such that
$$ n + h_{i_1,j_1}, \dots, n+h_{i_k,j_k} \in {\mathcal P}$$
but also that $n+h \not \in {\mathcal P}$ for any $h$ in $\{-k,\dots,k\}$ not equal to any of the $h_{i,j}$.  Taking weak limits of ${\mathcal P}-n$, we conclude that the orbit closure of $X$ contains a subset of $A$ that contains all of the $h_{i_1,j_1}, h_{i_2,j_2}, \dots$, and is hence infinite, giving the claim.
\end{proof}

This proposition has the following corollary, answering a question of Marius Lema\'ncyzk (private communication):

\begin{corollary}\label{uncount}  Every infinite admissible subset $A$ of integers contains a family of infinite subsets in $X$ of the cardinality of the continuum.  In particular, $X$ is uncountable.
\end{corollary}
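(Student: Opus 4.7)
The plan is to show, for any infinite admissible $A \subseteq \Z$, that the collection
$$\mathcal{X}_A \coloneqq \{ B \in X : B \subseteq A \text{ and } B \text{ is infinite} \} \subseteq 2^\Z$$
has cardinality of the continuum. Since $\mathcal{X}_A \subseteq X$, this immediately implies the secondary assertion that $X$ is uncountable. The strategy has two ingredients: using Proposition \ref{inf-ad} to show $\mathcal{X}_A$ is uncountable, then using descriptive set theory to upgrade uncountable to $2^{\aleph_0}$.

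I would first record that $\mathcal{X}_A$ is a Borel subset of the Polish space $2^\Z$. The set $X$ is closed as an orbit closure; the condition $B \subseteq A$ cuts out a closed set, being the countable intersection $\bigcap_{b \in \Z \setminus A} \{ B : b \notin B \}$ of clopen conditions; and $\{ B : B \text{ is infinite}\}$ is $G_\delta$ since for each $N$ the set $\{B : |B| \geq N\}$ is open, being the union of the clopen sets $\{B : F \subseteq B\}$ over finite $F \subseteq \Z$ of size $N$.

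Next, I would prove $\mathcal{X}_A$ is uncountable by contradiction. Suppose $\mathcal{X}_A = \{B_1, B_2, \ldots\}$ and enumerate $A = \{a_1 < a_2 < \cdots\}$. For each $n$, since $B_n$ is infinite I may select $b_n \in B_n \setminus \{a_1, \ldots, a_{n^2}\}$; writing $b_n = a_{k_n}$ with $k_n > n^2$ gives the sparsity bound $|\{k_m : m \geq 1\} \cap \{1, \ldots, M\}| \leq \sqrt{M}$ for every $M$, so $\N \setminus \{k_m : m \geq 1\}$ is infinite and hence so is $A'' \coloneqq A \setminus \{b_n : n \geq 1\}$. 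Being a subset of $A$, the set $A''$ is admissible; Proposition \ref{inf-ad} then produces an infinite $B'' \subseteq A''$ with $B'' \in X$, i.e., $B'' \in \mathcal{X}_A$. But for every $n$ we have $b_n \in B_n \setminus A'' \subseteq B_n \setminus B''$, so $B'' \neq B_n$, contradicting the enumeration. With uncountability established, the classical perfect set theorem applied to the Borel set $\mathcal{X}_A$ in the Polish space $2^\Z$ provides a perfect subset and hence $|\mathcal{X}_A| = 2^{\aleph_0}$.

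The main obstacle is the sparsity step: we need $A'' = A \setminus \{b_n : n \geq 1\}$ to remain infinite in order to reapply Proposition \ref{inf-ad}, and the growth condition $b_n \notin \{a_1, \ldots, a_{n^2}\}$ is the quick device that achieves this. Everything else is bookkeeping: Proposition \ref{inf-ad} serves as the black box that keeps producing new elements of $\mathcal{X}_A$, while the cardinality upgrade is a standard descriptive set theoretic fact.
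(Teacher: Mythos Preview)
Your argument is correct, but it takes a genuinely different route from the paper. The paper gives a direct combinatorial construction due to Hamkins: enumerate $A$ by the nodes of an infinite binary tree, so that each of the continuum many branches determines an infinite admissible subset $A' \subseteq A$; Proposition \ref{inf-ad} then yields an infinite $A'' \subseteq A'$ lying in $X$, and since any two branches share only finitely many nodes the resulting sets $A''$ are pairwise distinct. This produces continuum many elements of $\mathcal{X}_A$ in one stroke, with no appeal to descriptive set theory. Your approach instead uses Proposition \ref{inf-ad} in a diagonalisation to show $\mathcal{X}_A$ is uncountable, and then invokes the perfect set property for Borel subsets of Polish spaces to upgrade to $2^{\aleph_0}$. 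The tree argument is more elementary and self-contained; your version is slicker once one is willing to import the perfect set theorem as a black box, and has the mild advantage of identifying $\mathcal{X}_A$ as a $G_\delta$ set. One small cosmetic point: your enumeration $\mathcal{X}_A = \{B_1, B_2, \ldots\}$ tacitly assumes the set is countably \emph{infinite}; the finite case is of course even easier (remove finitely many points and apply Proposition \ref{inf-ad}), but you may want to say so explicitly.
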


We remark that the uncountability of ${\mathcal A}$ was previously observed in \cite[Remark 2.41]{dkkl}.

\begin{proof}  We use an argument of Joel David Hamkins\footnote{{\tt mathoverflow.net/a/440671/766}}.  Use the elements of the countably infinite set $A$ to enumerate the entries of an infinite binary tree $B$.  The number of branches of this tree has the cardinality of the continuum.  Each branch determines an infinite subset $A'$ of $A$, which inherits the admissibility of $A$, thus by Proposition \ref{inf-ad} contains a further infinite set $A''$ in $X$.  Since any two branches of $B$ have only finitely many nodes in common, the sets $A''$ are all distinct, and the claim follows.
\end{proof}

\begin{remark}  An alternate way to prove Corollary \ref{uncount} (communicated to us by Mariusz Lema\'nczyk and Forte Shinko) proceeds by  first using Proposition \ref{inf-ad} and a variant of the Cantor diagonal argument to show that $A$ contains uncountably many subsets in $X$, and then appealing to the Cantor--Bendixon theorem \cite[Theorem 6.5]{kechris} to show that this family of subsets (being a closed uncountable subset of a Cantor space) contains a non-empty perfect set and hence has the cardinality of the continuum.
\end{remark}

\end{document}